\tikzset{middlearrow/.style={
		decoration={markings,
			mark= at position 0.5 with {\arrow{#1}} ,
		},
		postaction={decorate}
	}
}
\newcommand{\proj}{\mathrm{proj}}
\DeclareMathOperator*{\argmin}{arg\,min}
\begin{document}
\title{Projections with logarithmic divergences\thanks{This research is supported by NSERC Discovery Grant RGPIN-2019-04419 and a Connaught New Researcher Award.}}
%
%
\author{Zhixu Tao \and
Ting-Kam Leonard Wong}
\authorrunning{Z.~Tao and T.-K.~L.~Wong}
%
\institute{University of Toronto\\
\email{zhixu.tao@mail.utoronto.ca}\\
\email{tkl.wong@utoronto.ca}}
\maketitle              
\begin{abstract}
In information geometry, generalized exponential families and statistical manifolds with curvature are under active investigation in recent years. In this paper we consider the statistical manifold induced by a logarithmic $L^{(\alpha)}$-divergence which generalizes the Bregman divergence. It is known that such a manifold is dually projectively flat with constant negative sectional curvature, and is closely related to the $\mathcal{F}^{(\alpha)}$-family, a generalized exponential family introduced by the second author \cite{W18}. Our main result constructs a dual foliation of the statistical manifold, i.e., an orthogonal decomposition consisting of primal and dual autoparallel submanifolds. This decomposition, which can be naturally interpreted in terms of primal and dual projections with respect to the logarithmic divergence, extends the dual foliation of a dually flat manifold studied by Amari \cite{A01}. As an application, we formulate a new $L^{(\alpha)}$-PCA problem which generalizes the exponential family PCA \cite{CDS02}.

\keywords{Logarithmic divergence \and Generalized exponential family \and Dual foliation \and Projection \and Principal component analysis}
\end{abstract}
\section{Introduction} \label{sec:intro}
A cornerstone of information geometry \cite{A16,AJVS17} is the dually flat geometry induced by a Bregman divergence \cite{NA82}. This geometry underlies the exponential and mixture families and explains their efficacy in statistical applications. A natural direction is to study extensions of the dually flat geometry and exponential/mixture families as well as their applications. Motivated by optimal transport, in a series of papers \cite{PW16,PW18,W18,W19} Pal and the second author developed the $L^{(\alpha)}$-divergence which is a logarithmic extension of the Bregman divergence. Let $\alpha > 0$ be a constant. Given a convex domain $\Theta \subset \mathbb{R}^d$ and a differentiable $\alpha$-exponentially concave function $\varphi: \Theta \rightarrow \mathbb{R}$ (i.e., $\Phi = e^{\alpha \varphi}$ is concave), we define the $L^{(\alpha)}$-divergence ${\bf L}_{\varphi}^{(\alpha)} : \Theta \times \Theta \rightarrow [0, \infty)$ by
\begin{equation} \label{eqn:L.alpha.div}
{\bf L}_{\varphi}^{(\alpha)}[\theta : \theta'] = \frac{1}{\alpha} \log (1 + \alpha \mathsf{D} \varphi(\theta') \cdot (\theta - \theta')) - (\varphi(\theta) - \varphi(\theta')),
\end{equation}
where $\mathsf{D}\varphi$ is the Euclidean gradient and $\cdot$ is the Euclidean dot product. We recover the Bregman divergence (of a concave function) by letting $\alpha \rightarrow 0^+$.  Throughout this paper we work under the regularity conditions stated in \cite[Condition 7]{W18}; in particular, $\varphi$ is smooth and the Hessian $\mathsf{D}^2 \Phi$ is strictly negative definite (so $\Phi$ is strictly concave). By the general theory of Eguchi \cite{E83}, the divergence ${\bf L}_{\varphi}^{(\alpha)}$ induces a dualistic structure $(g, \nabla, \nabla^*)$ consisting of a Riemannian metric $g$ and a pair $(\nabla, \nabla^*)$ of torsion-free affine connections that are dual with respect to $g$. It was shown in \cite{PW18,W18} that the induced geometry is dually projectively flat with constant negative sectional curvature $-\alpha$ (see Section \ref{sec:prelim} for a brief review). Moreover, the $L^{(\alpha)}$-divergence is a canonical divergence (in the sense of \cite{AA15}) for such a geometry. Thus, this geometry can be regarded as the constant-curvature analogue of the dually flat manifold. The geometric meaning of the curvature $-\alpha$ was investigated further in \cite{WY19,WY19b}. Also see \cite{PW18,PW18b,WY19b} for connections with the theory of optimal transport. In \cite{W18} we also introduced the $\mathcal{F}^{(\alpha)}$-family, a parameterized density of the form
\begin{equation} \label{eqn:F.alpha.family}
p(x; \theta) = (1 + \alpha \theta \cdot F(x))^{-1/\alpha} e^{\varphi(\theta)},
\end{equation}
and showed that it is naturally compatible with the $L^{(\alpha)}$-divergence. To wit, the potential function $\varphi$ in \eqref{eqn:F.alpha.family} can be shown to be $\alpha$-exponentially concave, and its $L^{(\alpha)}$-divergence is the R\'{e}nyi divergence of order $1 + \alpha$. Note that letting $\alpha \rightarrow 0^+$ in \eqref{eqn:F.alpha.family} recovers the usual exponential family. In a forthcoming paper we will study in detail the relationship between the $\mathcal{F}^{(\alpha)}$-family and the $q$-exponential family \cite{N11}, where $q = 1 + \alpha$.

To prepare for statistical applications of the logarithmic divergence and the $\mathcal{F}^{(\alpha)}$-family, in this paper we study primal and dual projections with respect to the $L^{(\alpha)}$-divergence. These are divergence minimization problems of the form
\begin{equation} \label{eqn:projecion.problems}
\inf_{Q \in \mathcal{A}} {\bf L}_{\varphi}^{(\alpha)} [ P : Q] \quad \text{and} \quad \inf_{Q \in \mathcal{A}} {\bf L}_{\varphi}^{(\alpha)} [ Q : P],
\end{equation}
where $P$ is a given point and $\mathcal{A}$ is a submanifold of the underlying manifold. The optimal solutions in \eqref{eqn:projecion.problems} are respectively the primal and dual projections of $P$ onto $\mathcal{A}$. Our main result, presented in Section \ref{sec:foliation}, provides an orthogonal foliation of the manifold in terms of $\nabla$ and $\nabla^*$-autoparallel submanifolds. This extends the orthogonal foliation of a dually flat manifold constructed by Amari \cite{A01}, and shows that projections with respect to an $L^{(\alpha)}$-divergence are well-behaved. As an application,  we formulate in Section \ref{sec:pca} a nonlinear dimension reduction problem that we call the $L^{(\alpha)}$-PCA. In a nutshell, the $L^{(\alpha)}$-PCA problem extends the exponential family PCA \cite{CDS02} to the $\mathcal{F}^{(\alpha)}$-family \eqref{eqn:F.alpha.family}. In future research we will study further properties of the $L^{(\alpha)}$-PCA problem, including optimization algorithms and statistical applications.

\section{Preliminaries} \label{sec:prelim}
Consider an $L^{(\alpha)}$-divergence ${\bf L}_{\varphi}^{(\alpha)}$ as in \eqref{eqn:L.alpha.div}. We regard the convex set $\Theta$ as the domain of the (global) primal coordinate system $\theta$. We denote the underlying manifold by $\mathcal{S}$. For $P \in \mathcal{S}$, we let $\theta_P \in \Theta$ be its primal coordinates. We define the dual coordinate system by
\begin{equation} \label{eqn:dual.coordinates}
\eta_P = \mathsf{T}(\theta_P) := \frac{\mathsf{D} \varphi(\theta_P)}{1 - \alpha \mathsf{D} \varphi(\theta_P) \cdot \theta_P}.
\end{equation}
We call the mapping $\theta \mapsto \eta = \mathsf{T}(\theta)$ the $\alpha$-Legendre transformation and let $\Omega$ be the range of $\eta$. This transformation corresponds to the $\alpha$-conjugate defined by
\begin{equation*} 
\psi(y) = \inf_{\theta \in \Theta} \left(\frac{1}{\alpha}  \log (1 + \alpha \theta \cdot y) - \varphi(\theta) \right) .
\end{equation*}
It can be shown that $\psi$ is also $\alpha$-exponentially concave. The inverse of $\mathsf{T}$ is the $\alpha$-Legendre transform of $\psi$, and we have the self-dual expression
\begin{equation} \label{eqn:self.dual}
{\bf D}[P : Q] := {\bf L}_{\varphi}^{(\alpha)} [ \theta_P : \theta_Q] = {\bf L}_{\psi}^{(\alpha)} [ \eta_Q : \eta_P].
\end{equation}
We refer the reader to \cite[Section 3]{W18} for more details about this generalized duality. Note that \eqref{eqn:self.dual} defines a divergence ${\bf D}$ on $\mathcal{S}$. 

Let $(g, \nabla, \nabla^*)$ be the duallistic structure, in the sense of \cite{E83}, induced by the divergence ${\bf D}$ on $\mathcal{S}$. The following lemma expresses the Riemannian metric $g$ in terms of the coordinate frames $(\frac{\partial}{\partial \theta_i})$ and $(\frac{\partial}{\partial \eta_j})$ (also see \cite[Remark 6]{W18}).

\begin{lemma} \cite[Proposition 8]{W18} \label{lem:metric}
Let $\langle \cdot, \cdot \rangle$ be the Riemannian inner product. Then
\begin{equation} \label{eqn:metric}
\left\langle \frac{\partial}{\partial \theta_i}, \frac{\partial}{\partial \eta_j} \right\rangle = \frac{-1}{\Pi} \delta_{ij} + \frac{\alpha}{\Pi^2} \theta_j \eta_i, \quad \Pi = 1 + \alpha \theta \cdot \eta.
\end{equation}
\end{lemma}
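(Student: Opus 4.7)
My plan is to derive the cross inner product from the Eguchi-induced metric in pure $\theta$-coordinates, combined with the Jacobian of the $\alpha$-Legendre transformation $\mathsf{T}$. First, applying Eguchi's formula to \eqref{eqn:L.alpha.div} in $\theta$-coordinates gives
\[
g_{ij}(\theta) = -\left.\frac{\partial^2}{\partial \theta_i\, \partial \theta'_j} {\bf L}_{\varphi}^{(\alpha)}[\theta:\theta']\right|_{\theta'=\theta}.
\]
Writing $N = 1 + \alpha \mathsf{D}\varphi(\theta') \cdot (\theta - \theta')$, a direct differentiation of \eqref{eqn:L.alpha.div}—using $\partial N / \partial \theta'_j |_{\theta'=\theta} = -\alpha\, \partial_j \varphi(\theta)$ when evaluating at the diagonal—produces
\[
g_{ij}(\theta) = -\partial^2_{ij}\varphi(\theta) - \alpha\, \partial_i\varphi(\theta)\, \partial_j\varphi(\theta).
\]
(As a sanity check, $\Phi = e^{\alpha\varphi}$ makes this equivalent to $g_{ij} = -\partial^2_{ij}\Phi/(\alpha\Phi)$, which is positive definite by strict concavity of $\Phi$ and reduces to the Bregman metric $-\partial^2_{ij}\varphi$ as $\alpha \to 0^+$.)

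The second ingredient is an algebraic identity that effectively inverts \eqref{eqn:dual.coordinates}. Solving $\eta = \mathsf{D}\varphi / (1 - \alpha \mathsf{D}\varphi \cdot \theta)$ for $\mathsf{D}\varphi$: setting $\nu = \alpha\, \mathsf{D}\varphi(\theta) \cdot \theta$, one obtains $\alpha\, \theta \cdot \eta = \nu / (1-\nu)$, whence $\Pi = 1/(1-\nu)$ and
\[
\mathsf{D}\varphi(\theta) = \eta / \Pi, \qquad \Pi = 1 + \alpha\, \theta \cdot \eta.
\]
Differentiating this componentwise with respect to $\theta_l$ expresses the Hessian of $\varphi$ in terms of the Jacobian $J_{kl} := \partial \eta_k / \partial \theta_l$ and the vectors $\theta, \eta$. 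Substituting into the formula for $g_{ij}$ from the previous step and cancelling the $\alpha\, \partial_i\varphi \, \partial_k\varphi = \alpha \eta_i \eta_k/\Pi^2$ contribution against a matching term yields
\[
g_{ik}(\theta) = -\frac{J_{ik}}{\Pi} + \frac{\alpha \eta_i}{\Pi^2} \sum_m \theta_m\, J_{mk}.
\]

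The conclusion then follows by a chain-rule change of frame: writing $K_{kj} = \partial \theta_k / \partial \eta_j$, so that $K = J^{-1}$, we have
\[
\left\langle \frac{\partial}{\partial \theta_i}, \frac{\partial}{\partial \eta_j}\right\rangle = \sum_k K_{kj}\, g_{ik}(\theta).
\]
Substituting and invoking $JK = I$ collapses $\sum_k J_{ik} K_{kj}$ to $\delta_{ij}$ and $\sum_{k,m} \theta_m J_{mk} K_{kj}$ to $\theta_j$, giving exactly \eqref{eqn:metric}. I expect the main obstacle to be the algebraic bookkeeping, and in particular locating the identity $\mathsf{D}\varphi = \eta/\Pi$, without which the Jacobian of $\mathsf{T}$ is intractable. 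An alternative route would apply a mixed-coordinate version of Eguchi's formula to the self-dual expression \eqref{eqn:self.dual}, but the direct approach above is the most transparent.
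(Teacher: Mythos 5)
Your derivation is correct, but note that the paper does not prove this lemma at all --- it is quoted verbatim from \cite[Proposition 8]{W18}, so the only proof to compare against is the one in that reference. There, the computation is shorter: one uses the generalized Fenchel-type identity of the $\alpha$-duality, namely ${\bf D}[P:Q] = \frac{1}{\alpha}\log(1+\alpha\,\theta_P\cdot\eta_Q) - \varphi(\theta_P) - \psi(\eta_Q)$, and applies Eguchi's formula in \emph{mixed} coordinates ($\theta$ for the first argument, $\eta$ for the second). Since the terms $\varphi(\theta_P)$ and $\psi(\eta_Q)$ each depend on only one argument, the cross-derivative only hits $\frac{1}{\alpha}\log(1+\alpha\,\theta\cdot\eta')$, and
\[
-\left.\frac{\partial}{\partial\theta_i}\frac{\partial}{\partial\eta'_j}\,\frac{1}{\alpha}\log(1+\alpha\,\theta\cdot\eta')\right|_{\eta'=\eta}
= \frac{-1}{\Pi}\,\delta_{ij} + \frac{\alpha}{\Pi^2}\,\theta_j\eta_i
\]
drops out in two lines --- this is exactly the ``alternative route'' you mention at the end and then set aside. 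Your route instead computes the pure $\theta$-coordinate metric $g_{ij}=-\partial^2_{ij}\varphi-\alpha\,\partial_i\varphi\,\partial_j\varphi$ from Eguchi's formula, inverts \eqref{eqn:dual.coordinates} to get $\mathsf{D}\varphi=\eta/\Pi$, and changes frame with the Jacobian $J=\partial\eta/\partial\theta$; all three steps check out (the cancellation of the $\alpha\eta_i\eta_l/\Pi^2$ terms and the collapse via $JK=I$ are right), and as a byproduct you verify positive-definiteness via $g_{ij}=-\partial^2_{ij}\Phi/(\alpha\Phi)$. The only implicit assumption is $1-\alpha\,\mathsf{D}\varphi(\theta)\cdot\theta>0$ so that $\Pi=1/(1-\nu)$ and the inversion are legitimate, but this is part of the standing regularity conditions that make \eqref{eqn:dual.coordinates} well defined. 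So: correct, self-contained, but heavier in bookkeeping than the mixed-coordinate argument used in the cited source.
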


We call $\nabla$ the primal connection and $\nabla^*$ the dual connection. A primal geodesic is a curve $\gamma$ such that $\nabla_{\dot{\gamma}} \dot{\gamma} = 0$, and a dual geodesic is defined analogously. As shown in \cite[Section 6]{W18}, the geometry is dually projectively flat in the following sense: If $\gamma: [0, 1] \rightarrow \mathcal{S}$ is a primal geodesic, then under the primal coordinate system,  $\theta_{\gamma(t)}$ is a straight line up to a time reparameterization. Similarly, a dual geodesic is a time-changed straight line under the dual coordinate system. We also have the following generalized Pythagorean theorem.

\begin{theorem} [Generalized Pythagorean theorem] \cite[Theorem 16]{W18}
For $P, Q, R \in \mathcal{S}$, the generalized Pythagorean relation 
\begin{equation*} 
{\bf D}[Q : P] + {\bf D}[R : Q] = {\bf D}[R : P]
\end{equation*}
holds if and only if the primal geodesic from $Q$ to $R$ meets $g$-orthogonally to the dual geodesic from $Q$ to $P$.
\end{theorem}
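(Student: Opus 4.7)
The plan is to reduce both the Pythagorean identity and the $g$-orthogonality condition at $Q$ to the same polynomial relation in the coordinate dot products, working throughout in the dual systems $(\theta,\eta)$. First I would derive a Young-type mixed-coordinate formula for the divergence. Inverting the definition of $\mathsf{T}$ in \eqref{eqn:dual.coordinates} gives $\mathsf{D}\varphi(\theta') = \eta'/\Pi'$ with $\Pi' = 1+\alpha\theta'\cdot\eta'$, which collapses the argument of the logarithm in \eqref{eqn:L.alpha.div} to $(1+\alpha\theta\cdot\eta')/\Pi'$. Combined with the $\alpha$-Legendre relation $\varphi(\theta')+\psi(\eta') = \tfrac{1}{\alpha}\log\Pi'$ at conjugate points, this yields
$$
{\bf D}[P:Q] = \frac{1}{\alpha}\log\bigl(1+\alpha\,\theta_{P}\cdot\eta_{Q}\bigr) - \varphi(\theta_{P}) - \psi(\eta_{Q}).
$$

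Next, forming $\Delta := {\bf D}[Q:P] + {\bf D}[R:Q] - {\bf D}[R:P]$, the $\varphi(\theta_R)$ and $\psi(\eta_P)$ contributions cancel outright, and the residual $-\varphi(\theta_{Q}) - \psi(\eta_{Q})$ absorbs into a $-\tfrac{1}{\alpha}\log\Pi_{Q}$ term by the same Legendre identity, giving
$$
\Delta = \frac{1}{\alpha}\log\frac{(1+\alpha\,\theta_{Q}\cdot\eta_{P})(1+\alpha\,\theta_{R}\cdot\eta_{Q})}{(1+\alpha\,\theta_{Q}\cdot\eta_{Q})(1+\alpha\,\theta_{R}\cdot\eta_{P})}.
$$
The Pythagorean equality $\Delta=0$ is therefore equivalent, after cross-multiplying and cancelling the constant and the linear-in-$\alpha$ terms, to
$$
(\theta_{R}-\theta_{Q})\cdot(\eta_{P}-\eta_{Q}) = \alpha\bigl[(\theta_{Q}\cdot\eta_{P})(\theta_{R}\cdot\eta_{Q}) - (\theta_{Q}\cdot\eta_{Q})(\theta_{R}\cdot\eta_{P})\bigr]. \qquad(\star)
$$

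For the geometric side, dual projective flatness says the primal geodesic from $Q$ to $R$ is a reparameterized straight line in the chart $\theta$, so its velocity at $Q$ is a positive multiple of $v = \sum_{i}(\theta_{R}^{i}-\theta_{Q}^{i})\,\partial/\partial\theta^{i}$; similarly the dual geodesic from $Q$ to $P$ has velocity at $Q$ proportional to $w = \sum_{j}(\eta_{P,j}-\eta_{Q,j})\,\partial/\partial\eta_{j}$. Applying Lemma~\ref{lem:metric} termwise,
$$
\langle v,w\rangle_{Q} = -\frac{1}{\Pi_{Q}}\,(\theta_{R}-\theta_{Q})\cdot(\eta_{P}-\eta_{Q}) + \frac{\alpha}{\Pi_{Q}^{2}}\bigl[(\theta_{R}-\theta_{Q})\cdot\eta_{Q}\bigr]\bigl[\theta_{Q}\cdot(\eta_{P}-\eta_{Q})\bigr],
$$
so $g$-orthogonality reduces to $\Pi_{Q}(\theta_{R}-\theta_{Q})\cdot(\eta_{P}-\eta_{Q}) = \alpha[(\theta_{R}-\theta_{Q})\cdot\eta_{Q}][\theta_{Q}\cdot(\eta_{P}-\eta_{Q})]$. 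Expanding both bilinear brackets, three of the $(\theta_{Q}\cdot\eta_{Q})$-involving cross terms cancel between the two sides, and what remains is precisely $(\star)$, closing the equivalence.

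The only real obstacle is the polynomial bookkeeping in the last step: one must track eight bilinear dot products and confirm the cancellation of the three common $(\theta_Q\cdot\eta_Q)$-products on the two sides of the orthogonality identity. No additional geometric input beyond Lemma~\ref{lem:metric} and the straight-line description of primal and dual geodesics in their respective charts is required.
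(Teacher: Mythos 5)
Your proposal is correct: the mixed-coordinate identity ${\bf D}[P:Q]=\tfrac{1}{\alpha}\log(1+\alpha\,\theta_P\cdot\eta_Q)-\varphi(\theta_P)-\psi(\eta_Q)$, the resulting expression for $\Delta$, and the reduction of both $\Delta=0$ and $\langle v,w\rangle_Q=0$ to the relation $(\star)$ all check out (the three $(\theta_Q\cdot\eta_Q)$-terms do cancel as claimed, and multiplying by $\Pi_Q^2>0$ is harmless). The paper does not reproduce a proof but cites \cite[Theorem 16]{W18}, and your argument is essentially the same route taken there: the generalized Fenchel--Young identity at conjugate pairs combined with the metric formula of Lemma~\ref{lem:metric} and the straight-line description of primal/dual geodesics.
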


\section{Dual foliation and projection} \label{sec:foliation}
In this section we construct an orthogonal decomposition of $\mathcal{S}$. We begin with some notations. Let $\mathcal{A} \subset \mathcal{S}$ be a submanifold of $\mathcal{S}$. Thanks to the dual projective flatness, we say that $\mathcal{A}$ is $\nabla$-autoparallel (resp.~$\nabla^*$-autoparallel) if it is a convex set in the $\theta$ coordinates (resp.~$\eta$ coordinates). Consider a maximal $\nabla$-autoparallel submanifold $\mathcal{E}_k$ with dimension $k \leq d = \dim \mathcal{S}$. Given $P_0 \in \mathcal{E}_k$, we may  write 
\begin{equation} \label{eqn:E.k.2}
\mathcal{E}_k = \{P \in \mathcal{S}: \theta_P - \theta_{P_0} \in A_0 \},
\end{equation}
where $A_0 \subset \mathbb{R}^d$ is a vector subspace with dimension $k$. Dually, we may consider maximal $\nabla^*$-autoparallel submanifolds $\mathcal{M}_{k}$ with dimension $d - k$ (so $k$ is the codimension). 

We are now ready to state our first result. Here we focus on projections onto a $\nabla$-autoparallel submanifold. The other case is similar and is left to the reader.

\begin{theorem} \label{thm:dual.complement}
Fix $1 \leq k < d$. Consider a $\nabla$-autoparallel submanifold $\mathcal{E}_k$ and $P_0 \in \mathcal{S}$. Then there exists a unique $\mathcal{M}_k$ such that $\mathcal{E}_k \cap \mathcal{M}_k = \{P_0\}$ and the two submanifolds meet orthogonally, i.e., if $u \in T_{P_0} \mathcal{E}_k$ and $v \in T_{P_0} \mathcal{M}_k$, then $\langle u, v \rangle = 0$. We call $\mathcal{M}_k = \mathcal{M}_k(P_0)$ the dual complement of $\mathcal{E}_k$ at $P_0$. 
\end{theorem}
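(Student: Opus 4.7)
The plan is to first reduce existence and uniqueness of $\mathcal{M}_k$ to a linear-algebraic computation using Lemma~\ref{lem:metric}, and then invoke the generalized Pythagorean theorem to pin down the intersection. Any $(d-k)$-dimensional $\nabla^*$-autoparallel submanifold through $P_0$ has the form $\{P : \eta_P - \eta_{P_0} \in B\}$ for some $(d-k)$-dimensional subspace $B \subset \mathbb{R}^d$, so the task is to identify the correct $B$. Inserting $u = \sum_i a^i \frac{\partial}{\partial \theta_i}$ with $a \in A_0$ and $v = \sum_j b^j \frac{\partial}{\partial \eta_j}$ with $b \in B$ into Lemma~\ref{lem:metric} at $P_0$, a short calculation gives
\[
\langle u, v \rangle = a \cdot \bigl( -\Pi^{-1} b + \alpha \Pi^{-2} (b \cdot \theta_{P_0})\,\eta_{P_0} \bigr), \qquad \Pi = 1 + \alpha\,\theta_{P_0}\cdot\eta_{P_0}.
\]
Hence orthogonality for all such $a, b$ is equivalent to $T(B) \subseteq A_0^\perp$, where $A_0^\perp$ denotes the Euclidean orthogonal complement and $T:\mathbb{R}^d \to \mathbb{R}^d$ is the linear map $T(b) = -\Pi^{-1}b + \alpha\Pi^{-2}(b\cdot\theta_{P_0})\,\eta_{P_0}$.

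Next I would verify that $T$ is a linear automorphism. Writing $T$ as the rank-one perturbation $-\Pi^{-1} I + \alpha \Pi^{-2}\,\eta_{P_0}\theta_{P_0}^\top$ and applying the matrix determinant lemma yields $\det T = (-1)^d \Pi^{-(d+1)}$, which is nonzero because the identity $\Pi^{-1} = 1 - \alpha\,\mathsf{D}\varphi(\theta_{P_0})\cdot\theta_{P_0}$ derived from \eqref{eqn:dual.coordinates}, together with the regularity assumptions in \cite{W18}, forces $\Pi > 0$. Therefore $B_0 := T^{-1}(A_0^\perp)$ is the unique $(d-k)$-dimensional subspace satisfying the orthogonality requirement, and $\mathcal{M}_k := \{P : \eta_P - \eta_{P_0} \in B_0\}$ is the unique candidate. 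It is $\nabla^*$-autoparallel by construction, contains $P_0$, and meets $\mathcal{E}_k$ orthogonally there.

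It remains to prove $\mathcal{E}_k \cap \mathcal{M}_k = \{P_0\}$, which I handle via the generalized Pythagorean theorem. Let $Q$ lie in the intersection. Since $\mathcal{E}_k$ is $\theta$-convex, the primal geodesic $\gamma_1$ from $P_0$ to $Q$ is contained in $\mathcal{E}_k$, and so $\dot{\gamma}_1(0) \in T_{P_0}\mathcal{E}_k$; dually, since $\mathcal{M}_k$ is $\eta$-convex, the dual geodesic $\gamma_2$ from $P_0$ to $Q$ lies in $\mathcal{M}_k$ with $\dot{\gamma}_2(0) \in T_{P_0}\mathcal{M}_k$. By the orthogonality of these tangent spaces established above, $\dot{\gamma}_1(0) \perp \dot{\gamma}_2(0)$. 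Applying the generalized Pythagorean theorem with pivot $P_0$ and the choice $P = R = Q$ yields
\[
\mathbf{D}[P_0 : Q] + \mathbf{D}[Q : P_0] = \mathbf{D}[Q : Q] = 0,
\]
and since $\mathbf{D}$ is non-negative this forces $Q = P_0$.

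The main obstacle I anticipate is precisely this last step: local orthogonality at $P_0$ alone does not, in general, prevent two submanifolds of complementary dimension from intersecting elsewhere. The resolution exploits the autoparallelism of $\mathcal{E}_k$ and $\mathcal{M}_k$ in their respective coordinate systems, so that any candidate second intersection $Q$ can be joined to $P_0$ by primal and dual geodesics that are trapped inside $\mathcal{E}_k$ and $\mathcal{M}_k$ respectively, whereupon the Pythagorean relation collapses to the sum of two non-negative divergences equalling zero. By comparison, the linear-algebra portion is routine once the map $T$ is identified and shown to be invertible.
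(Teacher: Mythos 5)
Your proposal is correct and follows essentially the same route as the paper: you construct $B_0$ as the orthogonal complement of $A_0$ with respect to the mixed-coordinate metric matrix from Lemma~\ref{lem:metric} (your $T^{-1}(A_0^\perp)$ is exactly the paper's $\{b : a^{\top} G b = 0 \ \forall a \in A_0\}$), and you rule out a second intersection point by the same generalized Pythagorean argument applied to the primal and dual geodesics joining $P_0$ to the putative point. The only difference is cosmetic: you verify the invertibility of $G$ explicitly via the matrix determinant lemma and spell out the uniqueness of $B_0$, both of which the paper asserts without detail.
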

\begin{proof}
Let $\mathcal{E}_k$ be given by \eqref{eqn:E.k.2}. We wish to find a submanifold $\mathcal{M}_{k}$ of the form
\begin{equation} \label{eqn:Mk.candidate}
	\mathcal{M}_{k} = \{P \in \mathcal{S} : \eta_P - \eta_{P_0} \in B_0 \},
\end{equation}
where $B_0 \subset \mathbb{R}^d$ is a vector subspace of dimension $d - k$, such that $\mathcal{E}_k \cap \mathcal{M}_k = \{P_0\}$ and the two submanifolds meet orthogonally.

Consider the tangent spaces of $\mathcal{E}_k$ and $\mathcal{M}_k$ at $P_0$. Since $\mathcal{E}_k$ is given by an affine constraint in the $\theta$ coordinates, we have
\[
T_{P_0} \mathcal{E}_k = \left\{ u = \sum_i a_i \frac{\partial}{\partial \theta_i} \in T_{P_0} \mathcal{S}: (a_1, \ldots, a_d) \in A_0 \right\}.
\]
Similarly, we have
\[
T_p \mathcal{M}_k = \left\{ v = \sum_j b_j \frac{\partial}{\partial \eta_j} \in T_{P_0} S : (b_1, \ldots, b_d) \in B_0 \right\}.
\]
Given the subspace $A_0$, our first task is to choose a vector subspace $B_0$ such that $T_{P_0} \mathcal{S} = T_{P_0} \mathcal{E}_k \oplus T_{P_0} \mathcal{M}_k$ and $T_{P_0} \mathcal{E}_k \perp T_{P_0} \mathcal{M}_k$.

Let $u = \sum_i a_i \frac{\partial}{\partial \theta_i}, v = \sum_j b_j \frac{\partial}{\partial \eta_j} \in T_{P_0}\mathcal{S}$. Regard $a = (a_1, \ldots, a_d)^{\top}$ and $b = (b_1, \ldots, b_d)^{\top}$ as column vectors. Writing \eqref{eqn:metric} in matrix form, we have
\begin{equation} \label{eqn:metric.matrix.form}
	\langle u, v \rangle = \sum_{i, j} a_i b_j \left\langle \frac{\partial}{\partial \theta_i}, \frac{\partial}{\partial \eta_j} \right\rangle = a^{\top} G b,
\end{equation}
where the matrix $G = G(P_0)$ is invertible and is given by
\begin{equation} \label{eqn:metric.matrix.form2}
	G(P_0) = \frac{-1}{\Pi} I + \frac{\alpha}{\Pi^2} \theta_{P_0} \eta_{P_0}^{\top}.
\end{equation}

With this notation, we see that by letting $B_0$ be the subspace
\begin{equation} \label{eqn:right.orthogonal.complement}
	B_0 = \{b \in \mathbb{R}^d : a^{\top} G b = 0 \ \forall a \in A_0 \},
\end{equation}
and defining $\mathcal{M}_k$ by \eqref{eqn:Mk.candidate}, we have $T_{P_0} \mathcal{S} = T_{P_0} \mathcal{E}_k \oplus T_{P_0} \mathcal{M}_k$ and $T_{P_0} \mathcal{E}_k \perp T_{P_0} \mathcal{M}_k$. Regarding $(a, b) \mapsto a^{\top} G b$ as a nondegenerate bilinear form, we may regard $B_0 = A_0^{\perp}$ as the (right) orthogonal complement of $A_0$ with respect to $G$.

It remains to check that $\mathcal{E}_k \cap \mathcal{M}_k = \{P_0\}$. Suppose on the contrary that $\mathcal{E}_k \cap \mathcal{M}_k$ contains a point $P$ which is different from $P_0$. From the definition of $\mathcal{E}_k$ and $\mathcal{M}_k$, we have
\[
\theta_P - \theta_{P_0} \in A_0 \setminus \{0\}, \quad \eta_P - \eta_{P_0} \in B_0 \setminus \{0\}.
\]
Let $\gamma$ be the primal geodesic from $P_0$ to $P$ which is a straight line from $\theta_{P_0}$ to $\theta_P$ after a time change. Writing $\dot{\gamma}(0) = \sum_i a_i \frac{\partial}{\partial \theta_i} \in T_{P_0} \mathcal{E}_k$, we have $a = \lambda (\theta_P - \theta_{P_0})$ for some $\lambda > 0$. Dually, if $\gamma^*$ is the dual geodesic from $P_0$ to $P$ and $\dot{\gamma}^*(0) = \sum_j b_j \frac{\partial}{\partial \eta_j} \in T_{P_0} M_k$, then $b = \lambda' (\eta_P - \eta_{P_0})$ for some $\lambda' > 0$. It follows that
\[
\langle \dot{\gamma}(0), \dot{\gamma}^*(0) \rangle = \lambda \lambda' (\theta_P - \theta_{P_0})^{\top} G (\eta_P - \eta_{P_0}) = 0.
\]
Thus the two geodesics meet orthogonally at $P_0$. By the generalized Pythagorean theorem, we have
\[
{\bf D}[P_0 : P ] + {\bf D}[P : P_0] = {\bf D}[P_0 : P_0] = 0,
\]
which is a contradiction since $P \neq P_0$ implies ${\bf D}[P_0 : P ], {\bf D}[P_0 : P ] > 0$.	\qed
\end{proof}

\begin{remark}[Comparison with the dually flat case]
Theorem \ref{thm:dual.complement} is different from the corresponding result in \cite[Section 3B]{A01}. In \cite{A01}, given a dually flat manifold, Amari considered a $k$-cut coordinate system and showed that the $e$-flat $\mathcal{E}_k(c_k+)$ is orthogonal to the $m$-flat $\mathcal{M}_k(d_k-)$ for any values of $c_k$ and $d_k$. Since the Riemannian metric \eqref{eqn:metric} is different, the construction using $k$-cut coordinates no longer works in our setting. 
\end{remark}

As in the dually flat case, the orthogonal complement can be interpreted in terms of primal and dual projections. 

\begin{definition} [Primal and dual projections]
	Let $\mathcal{A} \subset \mathcal{S}$ and $P \in \mathcal{S}$. Consider the problem
	\begin{equation} \label{eqn:dual.projection.opt}
		{\bf D} [ \mathcal{A} : P] := \inf_{Q \in \mathcal{A}} {\bf D}[ Q : P].
	\end{equation}
	An optimal solution to \eqref{eqn:dual.projection.opt} is called a dual projection of $P$ onto $\mathcal{A}$ and is denoted by $\proj_{\mathcal{A}}^*(P)$. Similarly, the primal projection of $P$ onto $\mathcal{A}$ is defined by $\argmin_{Q \in \mathcal{A}} {\bf D}[P : Q]$ and is denoted by $\proj_{\mathcal{A}}$.
\end{definition}

The following theorem gives a geometric interpretation of the dual complement $\mathcal{M}_k$.

\begin{theorem} \label{thm:dual.complement.meaning}
	Consider a submanifold $\mathcal{E}_k$ and let $P_0 \in \mathcal{E}_k$. Let $\mathcal{M}_k = \mathcal{M}_k(P_0)$ be the dual complement of $\mathcal{E}_k$ at $P_0$ given by Theorem \ref{thm:dual.complement}. Then, for any $P \in \mathcal{M}_k$ we have $P_0 = \proj_{\mathcal{E}_k}^*(P)$. In fact, we have
	\begin{equation} \label{eqn:Mk.alternative}
		\mathcal{M}_k(P_0) = (\proj_{\mathcal{E}_k}^*)^{-1}(P_0) = \{P \in \mathcal{S}: \proj_{\mathcal{E}_k}^*(P) = P_0 \}.
	\end{equation}
	Consequently, for $P_0, P_1 \in E_k$ with $P_0 \neq P_1$, we have $M_k(P_0) \cap M_k(P_1) = \emptyset$.
\end{theorem}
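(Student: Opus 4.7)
The plan is to handle the three assertions in sequence: the generalized Pythagorean theorem gives the forward containment, a first-order optimality calculation gives the reverse, and the disjointness is then a corollary.

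For the first assertion, fix $P \in \mathcal{M}_k := \mathcal{M}_k(P_0)$ and an arbitrary $Q \in \mathcal{E}_k$. Since $\mathcal{E}_k$ is $\nabla$-autoparallel, the primal geodesic from $P_0$ to $Q$ stays in $\mathcal{E}_k$ and its initial tangent lies in $T_{P_0}\mathcal{E}_k$; dually, the dual geodesic from $P_0$ to $P$ stays in $\mathcal{M}_k$ and its initial tangent lies in $T_{P_0}\mathcal{M}_k$. By construction of the dual complement these two tangent subspaces are $g$-orthogonal, so the generalized Pythagorean theorem gives
\[
{\bf D}[Q:P] \;=\; {\bf D}[Q:P_0] + {\bf D}[P_0:P] \;\geq\; {\bf D}[P_0:P],
\]
with strict inequality for $Q \neq P_0$. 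Hence $P_0$ is the unique dual projection of $P$ onto $\mathcal{E}_k$, which proves the first statement and the inclusion $\mathcal{M}_k(P_0) \subseteq (\proj^*_{\mathcal{E}_k})^{-1}(P_0)$.

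For the reverse inclusion I would use that $P_0 = \proj^*_{\mathcal{E}_k}(P)$ forces the $\theta$-derivative of $Q \mapsto {\bf D}[Q:P]$ at $Q = P_0$ to annihilate $A_0$. A direct computation using $\mathsf{D}\varphi(\theta) = \eta/\Pi$, derived from \eqref{eqn:dual.coordinates}, reduces the Euclidean gradient of $\theta \mapsto {\bf L}^{(\alpha)}_\varphi[\theta:\theta_P]$ at $\theta_{P_0}$ to
\[
\omega \;=\; \frac{\eta_P}{\widetilde{\Pi}} - \frac{\eta_{P_0}}{\Pi_0}, \qquad \widetilde{\Pi} := 1 + \alpha\,\eta_P \cdot \theta_{P_0},\ \ \Pi_0 := 1 + \alpha\,\eta_{P_0}\cdot\theta_{P_0}.
\]
Optimality reads $a \cdot \omega = 0$ for every $a \in A_0$. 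Using the identity $\widetilde{\Pi} - \Pi_0 = \alpha\,\theta_{P_0}\cdot(\eta_P - \eta_{P_0})$, this condition rearranges to
\[
a \cdot (\eta_P - \eta_{P_0}) \;=\; \frac{\alpha}{\Pi_0}\,(a\cdot\eta_{P_0})\bigl(\theta_{P_0}\cdot(\eta_P - \eta_{P_0})\bigr),
\]
which, after reading $a^\top G b$ off from \eqref{eqn:metric}, is exactly $a^\top G(\eta_P - \eta_{P_0}) = 0$. Thus $\eta_P - \eta_{P_0} \in B_0$, i.e., $P \in \mathcal{M}_k(P_0)$, establishing the missing inclusion in \eqref{eqn:Mk.alternative}.

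The disjointness is then immediate: any $P \in \mathcal{M}_k(P_0) \cap \mathcal{M}_k(P_1)$ has both $P_0$ and $P_1$ as its unique dual projection onto $\mathcal{E}_k$ by the first assertion, forcing $P_0 = P_1$. The step I expect to be the main obstacle is the algebraic matching in the second paragraph: verifying that the Euclidean $\theta$-gradient of ${\bf L}^{(\alpha)}_\varphi[\,\cdot:\theta_P]$ at $\theta_{P_0}$ encodes the same linear constraint on $A_0$ as the metric pairing $a \mapsto a^\top G(\eta_P - \eta_{P_0})$. In the Bregman limit $\alpha \to 0^+$ this is automatic since $(\partial/\partial\theta_i)$ and $(\partial/\partial\eta_j)$ become biorthogonal, but here the rank-one correction in $G$ requires the explicit manipulation above.
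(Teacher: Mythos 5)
Your proposal is correct and follows essentially the same route as the paper: the forward inclusion via autoparallelism plus the generalized Pythagorean theorem, the reverse inclusion via the first-order optimality condition (which the paper only invokes implicitly, while you carry out the gradient computation explicitly --- and it checks out: $\mathsf{D}\varphi(\theta)=\eta/\Pi$, and your rearranged stationarity condition is exactly the $g$-orthogonality of $\eta_P-\eta_{P_0}$ to $A_0$ read off from \eqref{eqn:metric}), and disjointness from uniqueness of the dual projection. No gaps.
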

\begin{proof}
	Let $P \in \mathcal{M}_k$. By definition of $\mathcal{M}_k$, the dual geodesic $\gamma^*$ from $P$ to $P_0$ is orthogonal to $\mathcal{E}_k$. Since $\mathcal{E}_k$ is $\nabla$-autoparallel, for any $Q \in \mathcal{E}_k$, $Q \neq P$, the primal geodesic $\gamma$ from $Q$ to $P_0$ lies in $\mathcal{E}_k$, and its orthogonal to $\gamma^*$. By the generalized Pythagorean theorem, we have
	\[
	{\bf D}[Q : P] =  {\bf D}[P_0 : P] + {\bf D}[ Q : P_0] > {\bf D}[P_0 : P].
	\]
	It follows that $P_0 = \proj_{\mathcal{E}_k}^*(P)$. This argument also shows that the dual projection onto $\mathcal{E}_k$, if exists, is unique.
	
	Conversely, suppose $P \in \mathcal{S} \setminus \mathcal{M}_k$. Then, by definition of $\mathcal{M}_k$, the dual geodesic from $P$ to $P_0$ is not orthogonal to $\mathcal{E}_k$. This violates the first order condition of the optimization problem \eqref{eqn:dual.projection.opt}. Hence $P_0$ is not the dual projection of $P$ onto $\mathcal{E}_k$ and we have $\mathcal{M}_k = (\proj_{\mathcal{E}_k}^*)^{-1}(P_0)$. \qed
\end{proof}

We have shown that the dual complements are disjoint and correspond to preimages of the dual projections. We complete the circle of ideas by stating the dual foliation. See Figure \ref{fig:PCA.output} for a graphical illustration in the context of principal component analysis with respect to an $L^{(\alpha)}$-divergence.

\begin{corollary} [Dual foliation] \label{cor:dual.foliation}
	Let $\mathcal{E}_k$ be given. Suppose that for each $P \in \mathcal{S}$ the infimum in \eqref{eqn:dual.projection.opt} is attained. Then
	\begin{equation} \label{eqn:dual.foliation}
		\mathcal{S} = \bigcup_{P_0 \in \mathcal{\mathcal{E}}_k} \mathcal{M}_k(P_0),
	\end{equation}
	where the union is disjoint. We call \eqref{eqn:dual.foliation} a dual foliation of $\mathcal{S}$.
\end{corollary}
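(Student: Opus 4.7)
The plan is to assemble the corollary directly from Theorem~\ref{thm:dual.complement.meaning}, which already does essentially all the work; the only two things to verify are (i) that every $P\in\mathcal{S}$ lies in some $\mathcal{M}_k(P_0)$, and (ii) that distinct base points yield disjoint leaves. Both of these are close to immediate once we interpret the leaves as preimages of the dual projection onto $\mathcal{E}_k$.

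First I would fix an arbitrary $P\in\mathcal{S}$ and invoke the hypothesis that the infimum in \eqref{eqn:dual.projection.opt} is attained. This produces some $P_0\in\mathcal{E}_k$ with ${\bf D}[P_0:P]=\inf_{Q\in\mathcal{E}_k}{\bf D}[Q:P]$, i.e.\ $P_0=\proj_{\mathcal{E}_k}^*(P)$. By the alternative characterization \eqref{eqn:Mk.alternative} established in Theorem~\ref{thm:dual.complement.meaning}, this is equivalent to $P\in\mathcal{M}_k(P_0)$. Hence $P\in\bigcup_{P_0\in\mathcal{E}_k}\mathcal{M}_k(P_0)$, and since $P$ was arbitrary we conclude that the union covers $\mathcal{S}$. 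The reverse inclusion $\bigcup_{P_0\in\mathcal{E}_k}\mathcal{M}_k(P_0)\subset\mathcal{S}$ is trivial because each $\mathcal{M}_k(P_0)$ is by definition a subset of $\mathcal{S}$.

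Next I would verify disjointness. This is the last assertion of Theorem~\ref{thm:dual.complement.meaning}: for $P_0\neq P_1$ in $\mathcal{E}_k$, one has $\mathcal{M}_k(P_0)\cap\mathcal{M}_k(P_1)=\emptyset$. The quickest way to see it from our vantage point is via the preimage description: if $P$ lay in both $\mathcal{M}_k(P_0)$ and $\mathcal{M}_k(P_1)$, then $\proj_{\mathcal{E}_k}^*(P)$ would equal both $P_0$ and $P_1$, contradicting the uniqueness of the dual projection onto $\mathcal{E}_k$ (which was also recorded inside the proof of Theorem~\ref{thm:dual.complement.meaning} as a consequence of the strict positivity of ${\bf D}$ off the diagonal and the generalized Pythagorean theorem).

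There is no real obstacle here; this is a wrap-up corollary whose content is to package the previous two theorems into the geometric picture of a foliation. The only delicate point worth flagging is that the decomposition depends essentially on the attainment hypothesis: without it one could only produce a partial foliation over the set of points admitting a dual projection onto $\mathcal{E}_k$. In the $L^{(\alpha)}$-setting this hypothesis is typically ensured by compactness of $\mathcal{E}_k$ (or closedness in a suitable completion of $\Theta$), and I would add a brief remark to that effect after the proof.
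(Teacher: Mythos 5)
Your proposal is correct and matches the intended argument: the corollary is meant to follow directly from Theorem~\ref{thm:dual.complement.meaning}, with the attainment hypothesis giving $P \in \mathcal{M}_k\bigl(\proj_{\mathcal{E}_k}^*(P)\bigr)$ via \eqref{eqn:Mk.alternative} and disjointness coming from the uniqueness of the dual projection recorded there. Your closing remark on when attainment holds is a sensible addition but not required.
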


\begin{remark}
In \cite{Murataetal04} the dual foliation derived from a Bregman divergence was used to study a $\mathcal{U}$-boost algorithm. It is interesting to see if the logarithmic divergence -- which satisfies a generalized Pythagorean theorem and induces a dual foliation -- leads to a class of new algorithms.
\end{remark}

\section{PCA with logarithmic divergences} \label{sec:pca}
Motivated by the success of Bregman divergence, it is natural to consider statistical applications of the $L^{(\alpha)}$-divergence. In this section we consider dimension reduction problem with the logarithmic divergences. Principal component analysis (PCA) is a fundamental technique in dimension reduction \cite{JC16}. The most basic and well-known version of PCA operates by projecting orthogonally the data onto a lower dimensional affine subspace in order to minimize the sum of squared errors. Equivalently, this can be phrased as a maximum likelihood estimation problem where each data point is normally distributed and the mean vectors are constrained to lie on a lower dimensional affine subspace. Using the duality between exponential family and Bregman divergence (which can be regarded as a generalization of the quadratic loss), Collins et al.~\cite{CDS02} formulated a nonlinear extension of PCA to exponential family. Here, we propose to replace the Bregman divergence by an $L^{(\alpha)}$-divergence, and the exponential family by the $\mathcal{F}^{(\alpha)}$-family \eqref{eqn:F.alpha.family}. We call the resulting problem $L^{(\alpha)}$-PCA.

We assume that an $\alpha$-exponentially concave function $\psi$ is given on the dual domain $\Omega$ which we also call the data space (state space of data). We define the primal parameter by $\theta = \mathsf{T}^{-1}(\eta) = \frac{\mathsf{D} \psi(\eta)}{1 - \alpha \mathsf{D} \psi(\eta) \cdot \eta}$ which takes values in the primal domain $\Theta$ (parameter space). For $k \leq d = \dim \Theta$ fixed, let $\mathsf{A}_k(\Theta)$ be the set of all $A \cap \Theta$ where $A \subset \mathbb{R}^d$ is a $k$-dimensional affine subspace.

\begin{figure}[t!]
\centering
\begin{tikzpicture}[scale = 0.6]
		
\draw (-5,0) ellipse (3 and 4);
		
\draw[blue!30, fill = blue!30] (-7.5, -1.2) -- (-2.7, -0.2) -- (-2.2, 1) -- (-6.9, 0);
\node [below, blue] at (-2.5, 0) {\footnotesize $\mathcal{A}$};
\node [below, blue] at (-5, -1) {\tiny $\nabla$-autoparallel};

\draw (5,0) ellipse (3.2 and 3.9);	
\draw[->, thick] (-1,0) -- (1, 0);
\node [above] at (0, 0) {\footnotesize $\mathsf{T}$};	
		
\node [below] at (-5, -4.2) {\footnotesize parameter space $\Theta$};
\node [below] at (5, -4.2) {\footnotesize data space $\Omega$};
		
\draw[blue!30, fill = blue!30] (2.5, -1.8)  to[out=00,in=210] (7.5, -0.4)
		to[out=80, in=-60] (7, 2.5)
		to[out = 220, in = 10] (3, 1)
		to[out = 270, in = 70] (2.5, -1.8);
		\node [below, blue] at (6.5, -1.3) {\footnotesize $\mathcal{E}= \mathsf{T}(\mathcal{A})$};

		\draw[thick, gray] (4, 3) -- (5, 0);
		\node[circle, draw=, fill = black, inner sep=0pt, minimum size=3pt, label = right: {\tiny $y$}] at (4, 3)  {};
		\node [darkgray, above] at (3.6, 1.8) {\tiny dual};
		\node [darkgray, above] at (3.6, 1.4) {\tiny geodesic};
		\node [right] at (4.6, 1) {\tiny ${\bf L}_{\psi}^{(\alpha)}[ y : \hat{\eta}]$};

		\node[circle, draw=red, fill = red, inner sep=0pt, minimum size=3pt] at (5, 0)  {};
		\node [below, red] at (5, 0) {\tiny $\hat{\eta} = \mathsf{T}(\hat{\theta})$};

	\end{tikzpicture}
	\caption{Geometry of $L^{(\alpha)}$-PCA. Here $\hat{\eta}$ is the dual projection of $y$ onto $\mathcal{E}$.}
	\label{fig:PCA.geometry}
\end{figure}
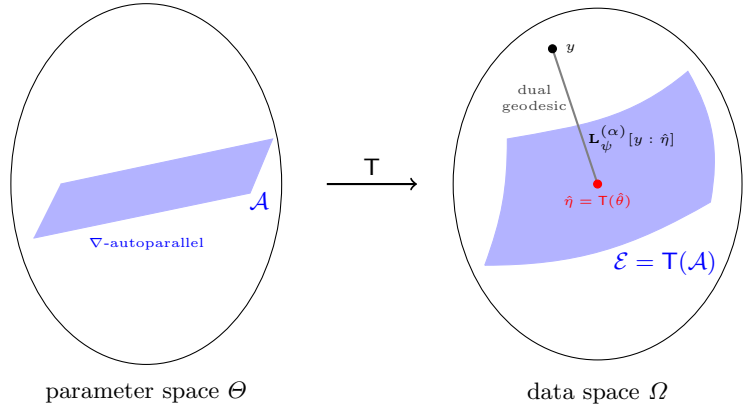

\begin{definition} [$L^{(\alpha)}$-PCA]
Let data points $y(1), \ldots, y(N) \in \Omega$ be given, and let $k \leq d$. The $L^{(\alpha)}$-PCA problem is
\begin{equation} \label{eqn:L.alpha.PCA}
\min_{\mathcal{A} \in \mathsf{A}_k(\Theta) } \min_{\theta(i) \in \mathcal{A}} \sum_{i = 1}^N {\bf L}_{\psi}^{(\alpha)} [ y(i)  : \eta(i) ], \quad \eta(i) = \mathsf{T}(\theta(i)).
\end{equation}
\end{definition}

The geometry of $L^{(\alpha)}$-PCA is illustrated in Figure \ref{fig:PCA.geometry}. A $k$-dimensional affine subspace $\mathcal{A} \subset \Theta$ is given in the parameter space $\Theta$ and provides dimension reduction of the data. The $\alpha$-Legendre transform $\mathsf{T}$ can be regarded as a ``link function'' that connects the parameter and data spaces. Through the mapping $\mathsf{T}$ we obtain a submanifold $\mathcal{E} = \mathsf{T}(\mathcal{A})$ which is typically curved in the data space but is $\nabla$-autoparallel under the induced geometry. From \eqref{eqn:self.dual}, the inner minimization in \eqref{eqn:L.alpha.PCA} is solved by letting $\eta(i) = \mathsf{T}(\theta(i))$ be the dual projection of $y(i)$ onto $\mathcal{E}$. Consequently, the straight line (dual geodesic) between $y(i)$ and $\eta(i)$ in the data space is $g$-orthogonal to $\mathcal{E}$. Finally, we vary $\mathcal{A}$ over $ \mathsf{A}_k(\Theta)$ to minimize the total divergence.

\begin{figure}[t!]
	\centering
	\includegraphics[scale=0.55]{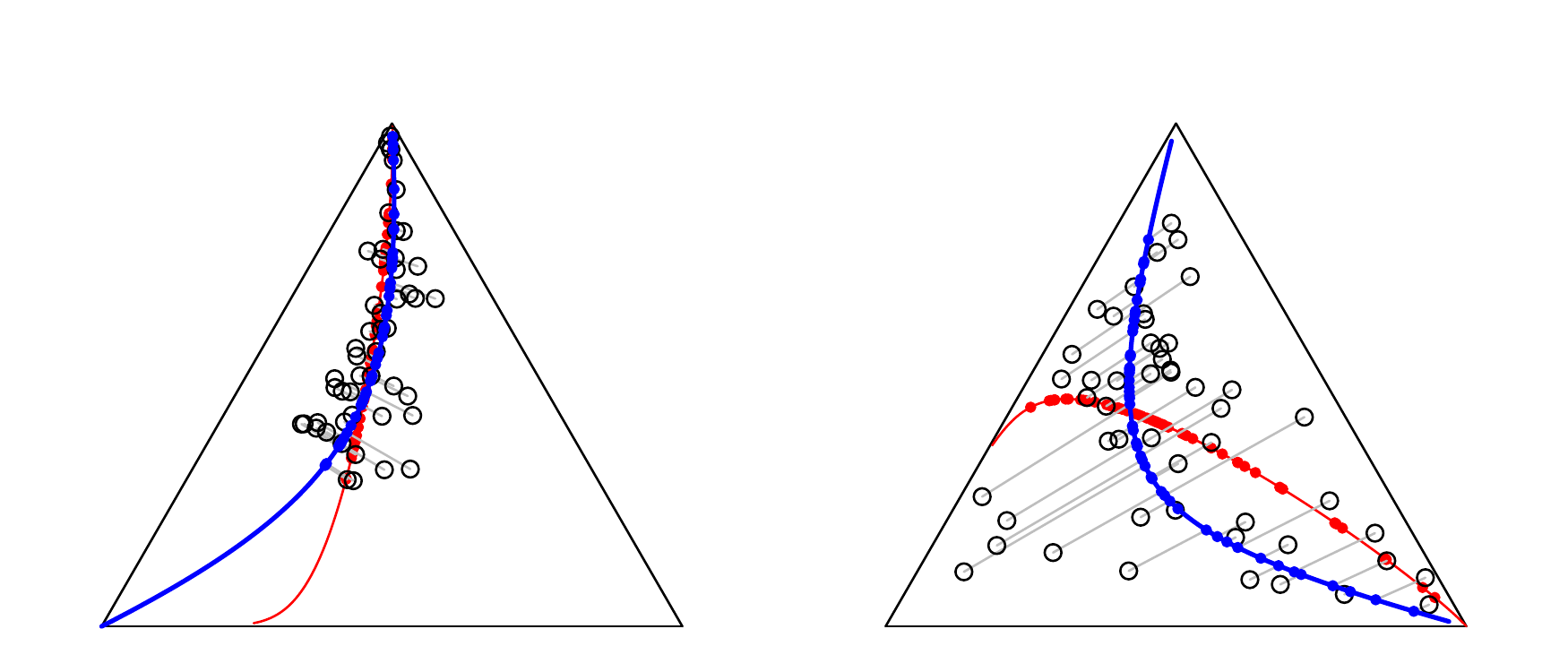}
	\vspace{-0.2cm}
	\caption{Two sample outputs of $L^{(\alpha)}$-PCA and Aitchison-PCA in the context of Example \ref{eg:Dir}. Black circles: Data points. Blue (solid) curve: Optimal $\nabla$-autoparallel submanifold $\mathcal{E} = \mathsf{T}(\mathcal{A})$ shown in the state space $\Delta_n$. Grey: Dual geodesics from data points to projected points on $\mathcal{E}$. The dual geodesics, which are straight lines in $\Delta_n$, are orthogonal to $\mathcal{E}$ by construction and form part of the dual foliation constructed in Corollary \ref{cor:dual.foliation}. Red (thin) curve: First principal component from Aitchison-PCA.}
	\label{fig:PCA.output}
\end{figure}

\begin{remark} [Probabilistic interpretation]
Consider an $\mathcal{F}^{(\alpha)}$-family \eqref{eqn:F.alpha.family}. Under suitable conditions, the density can be expressed in the form
\begin{equation} \label{eqn:F.alpha.log}
p(x; \theta) = e^{- {\bf L}_{\psi}^{(\alpha)}[ F(x) : \eta] - \psi(y)},
\end{equation}
where $\psi$ is the $\alpha$-conjugate of $\varphi$. Taking logarithm, we see that $L^{(\alpha)}$-PCA can be interpreted probabilistically in terms of maximum likelihood estimation, where each $\theta(i)$ is constrained to lie in an affine subspace of $\Theta$. In this sense the $L^{(\alpha)}$-PCA extends the exponential PCA to the $\mathcal{F}^{(\alpha)}$-family.
\end{remark}

To illustrate the methodology we give a concrete example. 

\begin{example}[Dirichlet perturbation] \label{eg:Dir}
Let $\Delta_n = \{p \in (0, 1)^n : \sum_i p_i = 1\}$ be the open unit simplex in $\mathbb{R}^n$. Consider the divergence
\begin{equation} \label{eqn:Dir.cost}
c(p, q) = \log \left( \frac{1}{n} \sum_{i = 1}^n \frac{q_i}{p_i} \right) - \frac{1}{n} \sum_{i = 1}^n \log \frac{q_i}{p_i},
\end{equation}
which is the cost function of the Dirichlet transport problem \cite{PW18b} and corresponds to the negative log-likelihood of the Dirichlet perturbation model $Q = p \oplus D$, where $p \in \Delta_n$, $D$ is a Dirichlet random vector and $\oplus$ is the Aitchison perturbation (see \cite[Section 3]{PW18b}). It is a multiplicative analogue of the additive Gaussian model $Y = x + \epsilon$. To use the framework of \eqref{eqn:L.alpha.PCA}, write $\eta_i = \frac{p_i}{p_n}$ and $y_i = \frac{q_i}{q_n}$ for $i = 1, \ldots, d := n - 1$, so that the (transformed) data space is $\Omega = (0, \infty)^d$, the positive quadrant. Then \eqref{eqn:Dir.cost} can be expressed as an $L^{(1)}$-divergence by $c(p, q) = {\bf L}_{\psi}^{(1)}[ y : \eta]$, where $\psi(y) = \frac{1}{n} \sum_{i = 1}^n \log y_i$. The $1$-Legendre transform is given by $\theta_i = \frac{1}{\eta_i} = \frac{p_n}{p_i}$, so the (transformed) parameter space is $\Theta = (0, \infty)^d$. In Figure \ref{fig:PCA.output} we perform $L^{(\alpha)}$-PCA for two sets of simulated data, where $d = 2$ (or $n = 3$) and $k = 1$. Note that the traces of dual geodesics are straight lines in the (data) simplex $\Delta_n$ \cite{PW18}. In Figure \ref{fig:PCA.output} we also show the outputs of the popular Aitchison-PCA based on the ilr-transformation \cite{EPMB03,PB11}. It is clear that the geometry of Dirichlet-PCA, which is non-Euclidean, can be quite different.
\end{example}

In future research, we plan to develop the $L^{(\alpha)}$-PCA carefully, including a rigorous treatment of \eqref{eqn:F.alpha.log}, optimization algorithms and statistical applications. 

%
%
\bibliographystyle{splncs04}
\bibliography{geometry.ref}

\begin{thebibliography}{10}
\providecommand{\url}[1]{\texttt{#1}}
\providecommand{\urlprefix}{URL }
\providecommand{\doi}[1]{https://doi.org/#1}

\bibitem{A01}
Amari, S.I.: Information geometry on hierarchy of probability distributions.
  IEEE transactions on information theory  \textbf{47}(5),  1701--1711 (2001)

\bibitem{A16}
Amari, S.I.: Information {G}eometry and {I}ts {A}pplications. Springer (2016)

\bibitem{AA15}
Ay, N., Amari, S.I.: A novel approach to canonical divergences within
  information geometry. Entropy  \textbf{17}(12),  8111--8129 (2015)

\bibitem{AJVS17}
Ay, N., Jost, J., V{\^a}n~L{\^e}, H., Schwachh{\"o}fer, L.: Information
  Geometry. Springer (2017)

\bibitem{CDS02}
Collins, M., Dasgupta, S., Schapire, R.E.: A generalization of principal
  components analysis to the exponential family. In: Advances in Neural
  Information Processing Systems. pp. 617--624 (2002)

\bibitem{egozcue2003isometric}
Egozcue, J.J., Pawlowsky-Glahn, V., Mateu-Figueras, G., Barcelo-Vidal, C.:
  Isometric logratio transformations for compositional data analysis.
  Mathematical Geology  \textbf{35}(3),  279--300 (2003)

\bibitem{EPMB03}
Egozcue, J.J., Pawlowsky-Glahn, V., Mateu-Figueras, G., Barcelo-Vidal, C.:
  Isometric logratio transformations for compositional data analysis.
  Mathematical Geology  \textbf{35}(3),  279--300 (2003)

\bibitem{E83}
Eguchi, S.: Second order efficiency of minimum contrast estimators in a curved
  exponential family. The Annals of Statistics pp. 793--803 (1983)

\bibitem{JC16}
Jolliffe, I.T., Cadima, J.: Principal component analysis: a review and recent
  developments. Philosophical Transactions of the Royal Society A:
  Mathematical, Physical and Engineering Sciences  \textbf{374}(2065),
  20150202 (2016)

\bibitem{Murataetal04}
Murata, N., Takenouchi, T., Kanamori, T., Eguchi, S.: Information geometry of
  u-boost and bregman divergence. Neural Computation  \textbf{16}(7),
  1437--1481 (2004)

\bibitem{NA82}
Nagaoka, H., Amari, S.I.: Differential geometry of smooth families of
  probability distributions. Tech. Rep. METR 82--7, University of Tokyo (1982)

\bibitem{N11}
Naudts, J.: Generalised Thermostatistics. Springer (2011)

\bibitem{PW16}
Pal, S., Wong, T.K.L.: The geometry of relative arbitrage. Mathematics and
  Financial Economics  \textbf{10}(3),  263--293 (2016)

\bibitem{PW18}
Pal, S., Wong, T.K.L.: Exponentially concave functions and a new information
  geometry. The Annals of Probability  \textbf{46}(2),  1070--1113 (2018)

\bibitem{PW18b}
Pal, S., Wong, T.K.L.: Multiplicative {S}chr\"{o}odinger problem and the
  {D}irichlet transport. Probability Theory and Related Fields
  \textbf{178}(1),  613--654 (2020)

\bibitem{PB11}
Pawlowsky-Glahn, V., Buccianti, A.: Compositional Data Analysis: Theory and
  Applications. John Wiley \& Sons (2011)

\bibitem{W18}
Wong, T.K.L.: Logarithmic divergences from optimal transport and {R}\'{e}nyi
  geometry. Information Geometry  \textbf{1}(1),  39--78 (2018)

\bibitem{W19}
Wong, T.K.L.: Information geometry in portfolio theory. In: Geometric
  Structures of Information, pp. 105--136. Springer (2019)

\bibitem{WY19}
Wong, T.K.L., Yang, J.: Logarithmic divergences: geometry and interpretation of
  curvature. In: International Conference on Geometric Science of Information.
  pp. 413--422. Springer (2019)

\bibitem{WY19b}
Wong, T.K.L., Yang, J.: Pseudo-{R}iemannian geometry embeds information
  geometry in optimal transport. arXiv preprint arXiv:1906.00030  (2019)

\end{thebibliography}

\end{document}